\newtheorem{theorem}
{Theorem}
\newtheorem{lemma}[theorem]{Lemma}
\newtheorem{proposition}[theorem]{Proposition}
\newtheorem*{conjecture}{Conjecture}
\theoremstyle{definition}
\theoremstyle{definition}
\theoremstyle{definition}
\numberwithin{equation}
{section}
\renewcommand{\max}{\mathrm{max}\,}
\newcommand{\ex}{\mathrm{ex}\,}
\newcommand{\dg}{\mathrm{dg}\,}
\newcommand{\downdeg}{\overleftarrow{\Delta}\,}
\title{Completing Partial Packings of Bipartite Graphs}
\author{Zolt\'an F\"uredi$^{{\rm a}}$\\
\and Ago-Erik Riet$^{{\rm b}}$\\
\and Mykhaylo Tyomkyn$^{{\rm c}}$\\
\small $^{\rm a}${\it Department of Mathematics
University of Illinois at Urbana-Champaign}\\
[-0.8ex]
\small{\it Urbana, IL 61801, USA}\\
[-0.8ex]
\small{\it and}\\
[-0.8ex]
\small{\it R\'enyi Institute of the Hungarian Academy of Sciences}\\
[-0.8ex]
\small{\it Budapest, P.O.Box 127, Hungary, H-1364}\\
\small $^{\rm b}${\it University of Memphis, Department of Mathematical
Sciences}\\
[-0.8ex]
\small{\it Memphis, TN 38152-3240, USA}\\
\small $^{\rm c}${\it University of Cambridge, Department of Pure Mathematics and Mathematical Statistics}\\
[-0.8ex]
\small{\it Centre for Mathematical Sciences, Wilberforce Road}\\
[-0.8ex]
\small{\it Cambridge, CB3 0WB, England}\\
}
\begin{document}
\maketitle

\begin{abstract}
Given a bipartite graph $H$ and an integer $n$, let $f(n;H)$ be the
smallest integer such that any set of edge disjoint copies of $H$
on $n$ vertices can be extended to an $H$-design on at most
$n+f(n;H)$ vertices. We establish tight bounds for the growth of $f(n;H)$ as $n\rightarrow \infty$. In particular, we prove the conjecture of F\"uredi and Lehel \cite{FuLe} that $f(n;H) = o(n)$. This settles a long-standing open problem.
\end{abstract}

\section{Introduction}

Let $H$ be a simple graph.  A \emph{partial $H$-packing of order $n$}, or simply \emph{$H$-packing}, is a set $\mathcal{P} := \{H_1,H_2,\ldots,H_m\}$ of edge-disjoint copies of $H$ whose union forms a simple graph on $n$ vertices. We say that an $H$-packing of order $n$ is \emph{complete} or an \emph{$H$-design} if the edge sets of $H_i,~i=1,\ldots,m$ partition the edge set of the complete graph on $n$ vertices. More generally, we say that a graph $G$ can be \emph{edge-decomposed} into copies of $H$ if $G$ is the union of some $H$-packing.

A long-standing problem in design theory is to find a way of completing an $H$-packing into an $H$-design of a larger size, using as few as possible new vertices. We define $f(n;H)$ to be the smallest integer such that any $H$-packing on $n$ vertices, can be extended to an $H$-design on at most
$n+f(n;H)$ vertices.

Many bounds of the type, $f(n;H)\le c(H)n$ have been proved for various graphs $H$ by explicit constructions. A (by no means complete) list of references includes Hoffman, K\" u\c c\" uk\c cif\c ci, Lindner, Roger, Stinson \cite{HoLiRo}, \cite{KuLiRo}, \cite{Li}, \cite{Li1}, \cite{LiRo}, \cite{LiRoSt}, Jenkins \cite{Je}, Bryant, Khodkar and El-Zanati \cite{BrKhEl}. See also F\"uredi and Lehel \cite{FuLe} for a survey of their results. 

Hilton and Lindner~\cite{HiLi} were the first to prove a sub-linear bound on $f(n;H)$ for a particular $H$. More precisely, they showed that a $C_4$-packing can be completed by adding $O(n^{3/4})$ new vertices. F\"uredi and Lehel \cite{FuLe} found the right order of magnitude for $f(n;C_4)$ by proving that $f(n;C_4)=\Theta(\sqrt{n})$. They conjectured that for any bipartite graph $H$ the packing can be completed by adding $o(n)$ new vertices. Our aim in this article is to give a proof of their conjecture. 

\begin{theorem} \label{ourthm}
For every bipartite graph $H$ there is a function $f(n;H) = o(n)$ such that every $H$-packing of order $n$ can be completed to an $H$-design on at most $n+f(n;H)$ vertices.
\end{theorem}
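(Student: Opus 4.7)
My plan is to embed $[n]$ into a vertex set $V = [n] \cup M$ with $|M| = m = o(n)$ and extend $\mathcal{P}$ to an $H$-design of $K_V$. Writing $L = K_n \setminus E(\mathcal{P})$ for the leave, this amounts to finding an $H$-decomposition of $G := L \cup K_{[n], M} \cup K_M$. I would choose $m = m(n)$ with $m \to \infty$ and $m = o(n)$, lying in a residue class modulo the Wilson constants of $H$ so that the usual edge- and degree-divisibility conditions can be met for the graph I eventually have to decompose.

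The first phase would cover every edge $uv \in L$ by an edge-disjoint copy of $H$ using $uv$ together with $h-2$ vertices of $M$. Each leave edge has $\Theta(m^{h-2})$ candidate embeddings, and any single edge of $K_{[n], M} \cup K_M$ lies in only a small fraction of them, so a Pippenger--Spencer or R\"odl-nibble type argument applied to the natural ``embeddings vs.\ edges'' hypergraph should produce the required edge-disjoint family. Extra care is needed at vertices $v \in [n]$ of large $L$-degree: in each relevant $H$-copy I would pre-assign $v$ to a role of minimum $H$-degree and balance the resulting demand uniformly across $M$, so that no single edge between $v$ and $M$ is over-subscribed.

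After the first phase, the uncovered part of $G$ is a subgraph $R$ of $K_{[n],M} \cup K_M$ that has lost only an $o(1)$ fraction of its edges and has minimum degree $(1-o(1))(n+m)$ within its bipartite-plus-clique host. In the second phase I would $H$-decompose $R$ using Wilson's theorem on $K_M$ together with a bipartite-design argument on the $K_{[n],M}$ part, or alternatively a suitable form of Gustavsson's near-complete decomposition theorem. The careful choice of $m$ up front, plus a short local ``switching'' step at the end of Phase 1 to correct any residual divisibility defect, is what makes this second phase go through.

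The hard part will be the interplay between the two phases: the $H$-copies selected in Phase 1 determine $R$, yet $R$ must still satisfy the minimum-degree and divisibility hypotheses of the decomposition theorem invoked in Phase 2. I expect the technical heart of the argument to be a simultaneous construction that copes with ``ugly'' high-$L$-degree vertices --- perhaps by partitioning $L$ according to degree and handling the sparse and dense pieces separately --- while still leaving behind a residual graph clean enough for Wilson- or Gustavsson-type tools to finish the job.
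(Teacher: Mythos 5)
There is a genuine gap, and it is located exactly where the real difficulty of the problem lies: your Phase~1 design of ``one leave edge per copy of $H$'' cannot work when the leave has vertices of high degree. If every vertex of $H$ has degree at least $2$ (e.g.\ $H=C_6$), then any copy of $H$ that covers a leave edge at a vertex $c$ and whose remaining vertices all lie in $M$ must consume at least one \emph{new} edge from $c$ into $M$; since there are only $m$ such edges, at most $m$ leave edges at $c$ can ever be covered this way. But a leave vertex can have degree $\Theta(n)$ even after the (necessary, and unmentioned in your proposal) reduction to an $H$-free leave --- a star $K_{1,n-1}$ is $C_6$-free --- so your scheme forces $m=\Omega(n)$. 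Pre-assigning $c$ a minimum-degree role does not help when $\delta(H)\ge 2$, and no amount of nibble-style balancing across $M$ fixes it, because the obstruction is a deterministic edge count at a single vertex. The paper's way around this is the whole point of Proposition~\ref{stars}: it covers an entire $k$-star of the leave (with $k=\deg_H(v)$ for a chosen $v\in H$) by one copy of $H$ in which the star's centre plays the role of $v$, so the centre spends \emph{all} of its $H$-degree on old edges and consumes no new edges at all; the cost is pushed onto the star's endvertices, and the degeneracy ordering guarantees each vertex is an endvertex of only $O(\ex(n,H)/n)$ stars. That balancing lemma, iterated via shrinking transversals (Lemma~\ref{zaran2}), is the technical heart you would have to reinvent.

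Two secondary problems: first, your edge count $|L|\cdot(e(H)-1)\le nm+\binom{m}{2}$ only gives $m=o(n)$ after you have removed a maximal edge-disjoint family of copies of $H$ from $L$ itself, so that $|L|\le\ex(n,H)=o(n^2)$; you never perform this reduction. Second, a R\"odl-nibble argument covers only a $(1-o(1))$-fraction of the leave, and the uncovered residue is again a graph inside $[n]$ that may concentrate at few vertices, so ``a short local switching step'' is not a proof --- the paper needs Vizing's theorem plus a separate matching-covering construction (and the matching-friendly dichotomy, which governs the $\sqrt{n}$ term in Theorem~\ref{ourrealthm}) precisely to dispose of this low-degree remainder. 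Your Phase~2 (Wilson on $K_M$, a bipartite design theorem on $K_{[n],M}$, Gustavsson to finish) is essentially the paper's Step~3 and is fine in spirit, but it is reached only after the obstructions above are resolved.
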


In fact we determine the asymptotical growth of the function $f(n;H)$ exactly. We say that a (not necessarily bipartite) graph $H$ is \emph{matching-friendly} if its vertex set $V(H)$ can be partitioned into $V_1$ and $V_2$ such that $V_2$ is an independent set of vertices and the induced graph $H[V_1]$ consists of a non-empty matching and a set of isolated vertices. For example, $C_4$ is not matching-friendly, but every other cycle is. The choice of the name `matching-friendly' should become clear in the course of the proof. 

\begin{theorem}\label{ourrealthm}
If $H$ is matching-friendly, then \[f(n;H)=\Theta(\ex(n,H)/n).\] If $H$ is not matching-friendly, then \[f(n;H)=\Theta\left(\max\left\{\ex(n,H)/n, \sqrt{n}\right\}\right).\]
\end{theorem}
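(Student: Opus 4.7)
The plan is to establish matching upper and lower bounds on $f(n;H)$, both exploiting the structural notion of matching-friendliness.

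\textbf{Upper bound.} Given a partial $H$-packing $\mathcal{P}$ on $[n]$, I would first greedily extend $\mathcal{P}$ within $[n]$ to a maximal packing, so its leave $L\subseteq K_n$ is $H$-free and therefore satisfies $|L|\le\ex(n,H)$. We then extend to an $H$-design on $n+f$ vertices in two stages: an absorption stage that exhausts $L$, then a cleaning stage that resolves divisibility. When $H$ is matching-friendly with partition $V(H)=V_1\cup V_2$ (so $V_2$ is independent and $H[V_1]$ is a non-empty matching $M$ plus isolated vertices), we embed a copy of $H$ by placing $V_2$ on new vertices and $V_1$ on old vertices: the copy absorbs $|M|$ independent leave edges together with the $|V_1||V_2|$ bipartite edges and the $\binom{|V_2|}{2}$ edges among the chosen new vertices. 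A single new vertex participates in $\Omega(n)$ such copies (since only $O(1)$ of its $n$ edges to old vertices are used per copy), so a greedy or semirandom packing of absorbers gives $f(n;H)=O(\ex(n,H)/n)$. When $H$ is not matching-friendly, no choice of $V_1,V_2$ yields a matching, and each new vertex absorbs only a non-matching configuration (e.g.\ a $P_3$ for $H=C_4$); the resulting loss of efficiency yields the weaker $f(n;H)=O(\max\{\ex(n,H)/n,\sqrt n\})$. The cleaning stage then invokes Wilson's theorem, or a recent strengthening (Keevash; Glock--K\"uhn--Lo--Osthus), to handle the residual leave and the standard divisibility conditions at the cost of only $O(1)$ further vertices.

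\textbf{Lower bound.} I would construct an $H$-packing on $[n]$ whose leave $L$ is close to an extremal $H$-free graph, so $|L|=\Omega(\ex(n,H))$. The existence of such a packing reduces to decomposing the dense graph $K_n\setminus L$ into copies of $H$, which follows from Keevash's or Wilson's theorem after a small perturbation of $L$ to meet divisibility. Once such a packing is in place, a double-count forces many new vertices in any completion: every edge of $L$ lies in some $H$-copy $C$ of the completion that contains at least one new vertex (otherwise $C\subseteq L$, contradicting the $H$-freeness of $L$); each new vertex belongs to at most $O((n+f)/\delta(H))=O(n)$ copies, and each such copy contains $O(1)$ edges of $L$. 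Thus $|L|=O(fn)$, giving $f=\Omega(\ex(n,H)/n)$. For non-matching-friendly $H$, an additional lower bound $f=\Omega(\sqrt n)$ is obtained by engineering a leave containing $\Omega(n^{3/2})$ edges arranged in obstructive configurations that matching-style absorbers cannot handle, each configuration requiring its own new vertex.

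\textbf{Main obstacle.} The main technical challenge is the absorption stage of the upper bound: the new $H$-copies straddling old and new vertices must be chosen so that their old-side contributions decompose $L$ exactly, while their new-side contributions piece together into a decomposable graph on $[f]$, without over- or under-covering any edge. This calls for an absorber-style construction, careful probabilistic placement, and probably the full strength of the recent hypergraph decomposition results. The cleaning stage and the realisation of the bad packing in the lower bound are comparatively standard.
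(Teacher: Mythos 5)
Your counting heuristics are sound, and your lower bound for the $\ex(n,H)/n$ term is essentially the paper's argument: realise an almost-extremal $H$-free leave by decomposing its dense complement (the paper uses Gustavsson's theorem plus Pyber's regular-subgraph theorem to fix divisibility), then double-count new edges against the $\Omega(\ex(n,H)/e(H))$ copies needed. But there are two genuine gaps. First, the upper bound is not a proof: you explicitly defer the entire construction to unspecified ``absorber-style constructions'' and ``the full strength of recent hypergraph decomposition results,'' and the one absorber you do describe cannot work in general. If the matching $M$ in $H[V_1]$ has size at least $2$ (which is forced for some matching-friendly graphs, e.g.\ $C_6$), your absorber needs $|M|$ \emph{vertex-disjoint} leave edges per copy, and an $H$-free leave can be a star $K_{1,n-1}$, which contains no two disjoint edges. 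This is exactly why the paper does not attack the leave directly: it first kills the high-degree part by decomposing the leave into $k$-stars with bounded endvertex-multiplicity via a degeneracy ordering (Proposition \ref{stars}), edge-colours an auxiliary hypergraph with $O(\ex(n,H)/n)$ colours to place copies of $H$ on those stars, iterates this to shrink a transversal of the uncovered graph geometrically (Lemma \ref{zaran2}), and only then uses Vizing plus matching-friendliness (or a $K_r$ with $r=O(\sqrt n)$ in the non-matching-friendly case) on the residual bounded-degree graph, finishing with the bipartite Wilson theorem (Theorem \ref{hagg}) and Gustavsson (Theorem \ref{gust}). None of this machinery, nor any substitute for it, appears in your sketch.

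Second, your $\Omega(\sqrt n)$ lower bound for non-matching-friendly $H$ is quantitatively wrong: a leave with $\Omega(n^{3/2})$ edges, each ``requiring its own new vertex,'' would give a bound far exceeding $\sqrt n$ (and contradicting the upper bound), and such a leave need not be realisable. The correct construction is a leave equal to a perfect matching $L$ (realisable via a doubling construction from a Wilson decomposition): since $H$ is not matching-friendly, any copy of $H$ covering an edge of $L$ whose remaining old-old edges also lie in $L$ must use at least one edge between two new vertices, so $\binom{f}{2}\geq |L|/e(H)=\Omega(n)$, giving $f=\Omega(\sqrt n)$. You should also note that your appeal to Wilson's theorem alone does not realise the bad packings --- $K_n\setminus L$ is not complete, so one genuinely needs a dense-graph decomposition theorem there.
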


Here $\ex(n,H)$ stands for the extremal number of $H$, see next section for its definition.

Theorem $\ref{ourrealthm}$ applies to all, not just bipartite graphs $H$. However if $H$ is not bipartite, it just states that $f(n;H)=\Theta(n)$. This is rather easy to deduce: take a packing $\mathcal{P}_n$, whose union consists of two complete graphs on $n/2$ vertices each. Such a packing exists for infinitely many values of $n$ by Wilson's theorem. It is not hard to check that $\mathcal{P}_n$ needs $\Omega(n)$ vertices in order to be extended to an $H$-design. On the other hand, every $H$-packing can be extended to an $H$-design by adding $O(n)$ new vertices; this is a consequence of Gustavsson's theorem, to be stated in Section \ref{decomp}. 

Thus from now on we shall assume that $H$ is bipartite. Note that Theorem \ref{ourrealthm} implies Theorem \ref{ourthm}.

\section{Notation and basic Tools}\label{tools}

As usual, we write $|G|$, $e(G)$, $\delta(G)$ and $\Delta(G)$ for the the number of vertices, number of edges, minimum degree and maximum degree of a graph $G$. These quantities will also be used for multigraphs and (multi)-hypergraphs. Let $N(v)$ be the neighbourhood of $v$, excluding $v$. 

Let $K_n$ and $K_{m,n}$ denote respectively the complete graph on $n$ vertices and the complete bipartite graph with bipartition classes of size respectively $m$ and $n$. The graph $K_{1,k}$ is also called a \emph{$k$-star}. It has a \emph{central vertex} of degree $k$ and $k$ \emph{endvertices} or \emph{leaves} of degree $1$.

The \emph{degeneracy} of $G$ is $\dg(G):=\max(\delta(G'))$, where the maximum is taken over all induced non-empty subgraphs $G'$ of $G$. Suppose that the vertices of $G$ are numbered $v_1,v_2,\ldots,v_n$, starting the numbering from $v_n$ backwards, so that $v_i$ is a minimum degree vertex of $G_{(i)}:=G[v_1,\ldots,v_i]$, the subgraph of $G$ induced by the vertices $v_1$ through $v_i$, for every $i=1,2,\ldots,n$. It is easy to see that $\dg(G)=\max{\delta(G_{(i)})}$. 

A \emph{transversal} of a graph $G$ is a subset $U$ of its vertices such that every edge of $G$ has at least one endpoint in $U$. In other words, transversals are complements of independent sets. The \emph{transversal number} $\tau(G)$ is the size of the smallest transversal of the graph $G$.

A graph $G$ not containing $H$ as a (not necessarily induced) subgraph is called \emph{$H$-free}. Let us denote by $\ex(n,H)$ the \emph{extremal number} for $H$, i.e.~the maximum number of edges of an $H$-free graph on $n$ vertices. More generally, let $\ex(G,H)$ be the maximum number of edges in an $H$-free subgraph of $G$. Then $\ex(n,H)=\ex(K_n,H)$. Also, if $F \subset H$ then $\ex(n,F) \leq \ex(n,H)$.

In our proof of Theorem \ref{ourrealthm} we shall use the following crude bound on \emph{symmetric Zarankiewicz numbers} $z=z(m,n,s,s)= \ex(K_{m,n},K_{s,s})$, see for instance \cite{Bo}.

\begin{theorem}\label{zarankievicz}
For all $m, n \geq s$, and $s \geq 1$ we have 
$$z(m,n,s,s)\leq 2nm^{1-1/s}+sm.$$
\end{theorem}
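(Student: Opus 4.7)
The plan is to prove this by the classical K\H{o}v\'ari--S\'os--Tur\'an double counting argument. Let $G \subseteq K_{m,n}$ be a $K_{s,s}$-free subgraph with bipartition classes $A$ of size $m$ and $B$ of size $n$, and write $z = e(G)$ for its number of edges. The goal is to show $z \leq 2nm^{1-1/s} + sm$.

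First I would count ordered pairs $(u, T)$ with $u \in A$ and $T \subseteq B$, $|T| = s$, such that $u$ is adjacent in $G$ to every vertex of $T$. Counting by $T$: since $G$ is $K_{s,s}$-free, every fixed $s$-subset $T \subseteq B$ has at most $s-1$ common neighbours in $A$, for otherwise any $s$ such common neighbours together with $T$ would form a copy of $K_{s,s}$. Hence the count is at most $(s-1)\binom{n}{s}$. Counting instead by $u$ yields $\sum_{u \in A} \binom{d_u}{s}$, where $d_u$ denotes the degree of $u$ in $G$.

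Next, by convexity of the polynomial $\binom{x}{s} = x(x-1)\cdots(x-s+1)/s!$ on $[s-1,\infty)$ (vertices of smaller degree contribute zero to the double count), Jensen's inequality gives $\sum_{u \in A} \binom{d_u}{s} \geq m\binom{z/m}{s}$. Combining this with the upper bound, and using $\binom{d}{s} \geq (d-s+1)^s/s!$ together with $\binom{n}{s} \leq n^s/s!$, one obtains $m\,(z/m - s + 1)^s \leq (s-1)\,n^s$. Solving for $z$ produces $z \leq (s-1)^{1/s}\,n\,m^{1-1/s} + (s-1)\,m$, and since $(s-1)^{1/s} < 2$ and $s-1 < s$ for every positive integer $s$, the claimed inequality follows.

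I do not anticipate any genuine obstacle here: this is a textbook application of the K\H{o}v\'ari--S\'os--Tur\'an method. The only points requiring mild care are the application of Jensen to the extended polynomial $\binom{x}{s}$ (handled by observing that vertices with $d_u < s$ contribute nothing to the count) and the routine verification that $(s-1)^{1/s} < 2$ for all $s \geq 1$, both of which are straightforward given the hypothesis $m,n \geq s$.
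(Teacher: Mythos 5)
Your proof is correct: the double count of pairs $(u,T)$, the bound of $s-1$ common neighbours per $s$-set, the convexity step (with the standard caveat about extending $\binom{x}{s}$ below $x=s-1$, which you handle), and the final estimates $(s-1)^{1/s}<2$ and $s-1<s$ all go through, giving exactly the stated bound $z(m,n,s,s)\leq 2nm^{1-1/s}+sm$. The paper itself offers no proof of this theorem --- it is quoted from Bollob\'as's \emph{Extremal Graph Theory} --- and your argument is precisely the classical K\H{o}v\'ari--S\'os--Tur\'an computation that the cited source uses, so there is nothing to reconcile.
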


It is a well-known fact that $z(n,n,s,s)\ge 2 \ex(n, K_{s,s})$, see \cite{Bo}. Since every bipartite graph $H$ is a subgraph of $K_{s,s}$ for some $s$, it follows that an $H$-free graph $G$ on $n$ vertices has at most $c(H)n^{2-\epsilon(H)}$ edges, where $\epsilon = \epsilon(H)$ is a small positive number. Therefore $\delta(G)\le cn^{1-\epsilon}$. Furthermore, since a subgraph of an $H$-free graph is also $H$-free, we may conclude that $\dg(G) \leq cn^{1-\epsilon}$. A more careful estimate on the degeneracy of an $H$-free graph is given by the following lemma.

\begin{lemma}\label{degenlemma}
For every graph $H$ there is a constant $C_H$ such that for any $H$-free graph $G$ holds \[\dg(G)\leq C_H \ex(n,H)/n.\]
\end{lemma}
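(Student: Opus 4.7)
My plan is to reduce the degeneracy bound to a size comparison between $\ex(m,H)$ and $\ex(n,H)$ for $m\leq n$. Given an $H$-free graph $G$ on $n$ vertices, consider a degeneracy ordering $v_n,v_{n-1},\ldots,v_1$; each $v_i$ is a minimum-degree vertex of $G_{(i)}:=G[v_1,\ldots,v_i]$, and $G_{(i)}$ is itself $H$-free as a subgraph of $G$. The trivial average-degree inequality then gives
\[
\delta(G_{(i)}) \leq \frac{2\,e(G_{(i)})}{i} \leq \frac{2\,\ex(i,H)}{i}.
\]
Taking the maximum over $i\leq n$ yields $\dg(G) \leq 2\max_{i\leq n}\ex(i,H)/i$, so the lemma reduces to establishing that $\ex(m,H)/m \leq C'_H\,\ex(n,H)/n$ for every $m\leq n$, with a constant depending only on $H$.

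I would prove this inequality by splitting into two cases. If $m\geq n/2$, monotonicity of $\ex(\cdot,H)$---obtained by padding an extremal graph on $m$ vertices with isolated vertices---gives $\ex(m,H)\leq \ex(n,H)$, and hence $\ex(m,H)/m \leq \ex(n,H)/(n/2) = 2\,\ex(n,H)/n$. If $m<n/2$, I would apply a tiling construction: let $F$ be an extremal $H$-free graph on $m$ vertices and form the disjoint union of $k=\lfloor n/m\rfloor\geq 2$ copies of $F$, padded with isolated vertices to reach $n$ vertices. Assuming $H$ is \emph{connected}, this graph is $H$-free on $n$ vertices and has at least $k\cdot\ex(m,H)\geq \tfrac{n}{2m}\ex(m,H)$ edges, so $\ex(n,H)\geq \tfrac{n}{2m}\ex(m,H)$ and therefore $\ex(m,H)/m \leq 2\,\ex(n,H)/n$, as required.

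The main obstacle is the case of a \emph{disconnected} $H$: a disjoint union of $H$-free graphs may itself contain $H$, since the components of $H$ can be realised across different copies of $F$. I expect this to require a separate ingredient---for instance, altering one of the $k$ copies to destroy a single designated component of $H$, or reducing to the connected case via a bound for a well-chosen component of $H$---while retaining an edge count of order $\tfrac{n}{m}\ex(m,H)$ up to a constant factor that depends only on $|H|$. Handling this reduction uniformly and verifying that the constants remain bounded in terms of $H$ alone is the step I would expect to demand the most care.
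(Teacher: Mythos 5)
Your reduction to the inequality $\ex(m,H)/m\leq C'_H\,\ex(n,H)/n$ for $m\leq n$, and your handling of it for connected $H$ (monotonicity for $m\geq n/2$, tiling with $\lfloor n/m\rfloor$ disjoint copies of an extremal graph for $m<n/2$), is correct and is essentially the paper's argument; the paper iterates a doubling inequality $\ex(n,H)/n\geq\ex(n/2,H)/(n/2)$ instead of tiling in one step, but this is a cosmetic difference.

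The disconnected case, however, is a genuine gap, and your anticipated fix is not quite right as stated. You say you expect to retain an edge count of order $\tfrac{n}{m}\ex(m,H)$ \emph{up to a constant factor}; but the natural repair (fix a copy of a component $H_1$ inside one tile and delete all edges meeting its vertices, so that the tile becomes free of the remaining components and no copy of $H$ can straddle tiles) costs an \emph{additive} $O(|H|\,n)$ edges, not a multiplicative constant --- the deleted vertices may have degree up to $m-1$. This additive loss cannot in general be absorbed into a constant factor: if $H$ is a disconnected forest, $\ex(m,H)=\Theta(m)$ and the loss is of the same order as the entire edge count, so the tiling argument collapses. The paper therefore splits further: if $H$ contains a cycle, then $\ex(n,H)=\Omega(n^{1+\epsilon})$ and the $O(|H|\,n)$ loss (which after iteration contributes only $O(|H|\log n)$ to the degree bound) is negligible; if $H$ is a forest, one abandons the tiling entirely and uses $n/2\leq\ex(n,H)\leq c_Hn$ directly, which gives the lemma with $C_H=4c_H$ (and this is the only case where $C_H$ genuinely depends on $H$). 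To complete your proof you need both this case distinction and a verification that the deletion really kills all cross-tile copies of $H$ (which uses that the components of $H$ other than isolated vertices have every vertex on an edge).
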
 

In other words, every $H$-free graph $G$ of order $m\leq n$ has a vertex of degree at most $C_H \ex(n,H)/n$. Suppose first that $H$ is connected. For $m>n/2$ the graph $G$ has average degree at most $2\ex(m,H)/m \leq 4\ex(n,H)/n$. On the other hand, considering the disjoint union of two $H$-extremal graphs on $n/2$ vertices, one can see that $\ex(n,H)/n \geq \ex(n/2,H)/(n/2)$. For $m<n/2$ we can find $k$ and $l$ such that $2^k \leq n < 2^{k+1} \leq 2^l \leq m < 2^{l+1}$. We obtain \[\delta(G)\leq \frac{2\ex(m,H)}{m}\leq \frac{4\ex(2^{k+1},H)}{2^{k+1}}\leq \frac{4\ex(2^l,H)}{2^l}\leq \frac{8\ex(n,H)}{n}.\]

If $H$ is disconnected, we have to be slightly more careful, since the disjoint union of two identical $H$-free graphs $G_1$ and $G'_1$ can be no longer $H$-free. For simplicity assume that $H$ has two connected components $H_1$ and $H_2$ and that neither of them is an isolated vertex (the proof generalizes easily to the general case). The above situation can happen only if $G_1$ contains both $H_1$ and $H_2$, but any two such subgraphs have a vertex in common. Thus, by taking a fixed copy of $H_1$ in $G_1$ and deleting all edges incident with its vertices, we can make sure that the resulting graph is $H_2$-free. Doing the same with $G'_1$ we obtain an $H$-free graph on $n$ vertices with at least $2(\ex(n/2,H)-(n/2)|H|)$ edges, thus  \[\frac{\ex(n,H)}{n}\geq \frac{\ex(n/2,H)}{n/2}-\left|H\right|.\] Continuing the argument as in the case when $H$ was connected yields \[\delta(G)\leq \frac{8\ex(n,H)}{n} + 4 \left|H\right|\log_2 {n} .\] So if $H$ contains a cycle, we know \cite{Bo} that $\ex(n,H)=\Omega(n^{1+\epsilon})$, therefore the term $4 \left|H\right|\log_2 {n}$ can be neglected.

Finally, if $H$ is a forest, it is known, see e.g. \cite{Bo}, that $m/2 \leq ex(m,H) \leq c_H m$, apart from the trivial case when $H$ has only one edge. So we can take $C_H = 4c_H$. This finishes the proof of Lemma \ref{degenlemma}. Note that only for disconnected forests did the constant $C_H$ actually depend on $H$.

We shall need two basic facts about graph colouring. Their proofs can be found in any standard textbook on graph theory e.g. \cite{Bo1}. One is the fact that a graph of maximal degree $\Delta$ can be $\Delta+1$-coloured by a greedy algorithm. The other theorem we need is Vizing's theorem: a graph of maximal degree $\Delta$ can be edge-coloured using $\Delta+1$-colours or, equivalently, can be decomposed into $\Delta+1$ matchings.

\section{A Primer on Graph Decompositions}\label{decomp}

In this section we would like to state various theorems on graph decompositions that we shall use in the proof.

Let $H$ be a bipartite simple graph of order $d$ with vertices $v_1,v_2,\ldots,v_d$ and let $\deg(v_i)$ denote the degree of $v_i$. Denote $\gcd(H) = \gcd(\deg(v_1),\ldots,\deg(v_d))$. For an $H$-design of order $n$ to exist we need the following obvious conditions:
\[e(H)|\binom{n}{2}\;\; \text{and} \;\; \gcd(H) | (n-1).\] If these conditions hold we say that $n$ is \emph{$H$-divisible}. If $n$ admits an $H$-design, we call it \emph{$H$-admissible}. Wilson \cite{Wi} proved the following fundamental theorem.

\begin{theorem}\label{wilson}
There exists an integer $n_0$, depending on $H$, such that every $n>n_0$ that is $H$-divisible is also $H$-admissible.
\end{theorem}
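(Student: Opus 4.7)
This is Wilson's celebrated theorem, and any approach I would take essentially follows his original recursive strategy; there is no substantially shorter route known. The overall plan is to reduce the decomposition of $K_n$ into copies of $H$, for arbitrary $H$-divisible $n$, to decompositions of $K_v$ for a finite list of small orders $v$, and then to glue these small decompositions together along the blocks of an auxiliary combinatorial design of order $n$.

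The first step is to produce a supply of $H$-decompositions of $K_v$ for enough small values of $v$, by means of algebraic constructions. For suitable prime powers $q$ one exhibits a copy of $H$ in $K_q$ whose edge differences cover $\mathbb{Z}_q\setminus\{0\}$ with uniform multiplicity; translating this copy by the additive action of $\mathbb{Z}_q$ then yields an $H$-decomposition of $K_q$. Combining such difference-family constructions with elementary tricks (adjoining or removing a point, taking disjoint unions, and so on) produces $H$-designs of infinitely many orders $v_1<v_2<\cdots$, at least one in each residue class dictated by the necessary divisibility conditions.

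The heart of the argument is the theory of \emph{pairwise balanced designs (PBDs)} and \emph{group divisible designs}. Wilson's \emph{fundamental construction} takes a PBD together with a collection of transversal designs of compatible parameters and builds a new, larger PBD whose block sizes lie in a prescribed set $K$. Iterating this, together with direct-product and inflation constructions, one proves the \emph{PBD closure theorem}: if $K$ is a set of block sizes for which PBDs exist at infinitely many orders in each admissible residue class, then PBDs with block sizes in $K$ exist at \emph{all} sufficiently large admissible orders. Applied with $K=\{v_1,v_2,\ldots\}$, this yields, for every sufficiently large $H$-divisible $n$, a PBD of order $n$ whose blocks all have sizes admitting $H$-designs.

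Finally, replacing each block of such a PBD by the $H$-design of the appropriate order produces an $H$-decomposition of $K_n$, completing the proof. The only part requiring real combinatorial ingenuity is the PBD closure theorem, whose proof is intricate but by now a standard chapter in design theory; everything else is routine once the correct base cases are in hand.
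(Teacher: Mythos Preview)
The paper does not give its own proof of this statement: Theorem~\ref{wilson} is quoted as Wilson's theorem and attributed to \cite{Wi}, with no argument supplied beyond the citation. So there is nothing in the paper to compare your proposal against.

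That said, your outline is a faithful high-level summary of Wilson's original proof strategy (difference-family base constructions, PBD closure, then block replacement), which is exactly what the paper is invoking by citing \cite{Wi}. Since the paper treats this as a black-box tool, the appropriate move in context is simply to cite Wilson rather than to reprove the result; your sketch is correct in spirit but unnecessary for the paper's purposes.
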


Wilson's theorem implies that $f(n;H)$ exists for every $H$ and $n$. Indeed, the union of an $H$-packing $\mathcal{P}$ on $n$ vertices can be considered as our new `small' graph $H'$. By Theorem \ref{wilson} there exists an $H'$-design $\mathcal{P'}$ for a sufficiently large $H'$-divisible number $n'$. By decomposing each copy of $H'$ in $\mathcal{P'}$ into copies of $H$, we obtain an $H$-design on $n'$ vertices. Since for a given $n$ there are only finitely many $H$-packings on $n$ vertices, and each of them can be completed to an $H$-design as above, $f(n;H)$ is well-defined. 

More generally, let us say a graph $G$ is $H$-divisible, if all degrees of $G$ are multiples of $\gcd(H)$ and $e(H)|e(G)$.

A very deep and powerful extension of Wilson's theorem was proved by Gustavsson~\cite{Gu}.

\begin{theorem}\label{gustdigr}
For any digraph $D$ there exist $\epsilon_D > 0$ and $N_D > 0$ such that if $G$ is a digraph satisfying:
\begin{enumerate}
\item $e(G)$ is divisible by $e(D)$;
\item there exist non-negative integers $a_{ij}$ such that
$$ \sum_{v_i\in V(D)} a_{ij} d_D^+(v_i) = d_G^+(u_j), \;\;\; \sum_{v_i\in V(D)} a_{ij} d_D^-(v_i) = d_G^-(u_j) $$
for every $u_j \in V(G)$;
\item if there exists $\vec{u_1u_2} \in E(G)$ such that $\vec{u_2u_1} \not\in E(G)$ then there exists 
$\vec{v_1v_2} \in E(D)$ such that $\vec{v_2v_1} \not\in E(D)$;
\item $|V(G)| \geq N_D$;
\item $\delta^+, \delta^- > (1-\epsilon_D)|V(G)|$
\end{enumerate}
then $G$ can be written as an edge-disjoint union of copies of $D$.
\end{theorem}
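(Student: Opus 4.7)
The plan is to follow a two-phase ``approximate-then-complete'' strategy that lifts Wilson's theorem (Theorem \ref{wilson}) to the digraph setting. First I would construct, inside $G$, a small sparse absorbing subdigraph $A$ with the property that for every sufficiently sparse ``defect'' digraph $R$ on $V(G)$ for which $A\cup R$ still satisfies the divisibility conditions (1) and (2) and the asymmetry condition (3), the union $A\cup R$ admits an exact decomposition into copies of $D$. Such an $A$ is assembled from many disjoint gadgets, each capable of swallowing a particular local defect; their abundance is guaranteed by the near-completeness assumption (5) together with (2) and (3).

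Next, I would apply a R\"odl-type nibble / random greedy algorithm on $G\setminus A$ to extract a $D$-packing that covers all but a $o(e(G))$ fraction of its edges. The auxiliary hypergraph whose hyperedges are the copies of $D$ in $G\setminus A$ is approximately regular thanks to condition (5), so a near-perfect matching in it exists. After the nibble, the uncovered edges form a sparse residue; rerouting these edges into $A$ and invoking the absorber property above completes the $D$-decomposition of $G$. A short degree-correction step, using Vizing's theorem on an auxiliary multigraph to tidy up the local degree sequence, ensures that the residue actually lies in the ``admissible'' class accepted by the absorber.

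The main obstacle is the construction of the absorbing gadgets. One must produce, for every possible defect pattern on a bounded vertex set, a gadget $J$ such that both $J$ and the union of $J$ with the defect decompose into copies of $D$, and one needs many vertex-disjoint instances of each. The role of condition (3) is subtle here: without it, the presence of an asymmetric edge in $R$ could prevent $A\cup R$ from ever decomposing into copies of $D$, so (3) is exactly what forces the construction to be carried out in the digraph category, with genuinely different gadgets in the symmetric and asymmetric cases. Condition (2), in turn, is what makes it possible to adjust local degree multiplicities inside each gadget so that the divisibility constraints can be met. With the gadget construction handled and Wilson's theorem applied to the resulting small instances, combining the three phases yields Theorem \ref{gustdigr}.
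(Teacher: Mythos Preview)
The paper does not prove Theorem \ref{gustdigr} at all: it is quoted as Gustavsson's result and cited to his dissertation \cite{Gu}, then used as a black box. So there is no ``paper's own proof'' to compare your proposal against.

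As for the proposal itself, what you describe is the modern absorbing-plus-nibble paradigm (in the spirit of Barber--K\"uhn--Lo--Osthus and related work), not Gustavsson's original argument, which predates the absorbing method and relies on the regularity lemma together with a careful iterative refinement. Your outline is plausible as a high-level roadmap, but it is far from a proof: the sentence ``with the gadget construction handled'' hides essentially all of the difficulty. You would need to show, for every bounded defect pattern consistent with (1)--(3), that a gadget $J$ and $J$ together with the defect both decompose into copies of $D$; this is itself a nontrivial existence statement that typically requires an algebraic or Wilson-type argument on small instances, and the asymmetric-edge case (condition (3)) genuinely complicates the gadget design. The ``short degree-correction step'' using Vizing's theorem is also underspecified --- in the digraph setting you must simultaneously correct in- and out-degrees at every vertex modulo the appropriate lattice, which is more delicate than a single edge-colouring. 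None of this is wrong in spirit, but as written it is a plan, not a proof.
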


Viewing simple graphs $G$ and $H$ as digraphs, by orienting each edge in both directions, the above theorem translates to

\begin{theorem}\label{gust}
For every $H$ there exist $m_0$ and $\epsilon_0$ such that every $H$-divisible graph $G$ on $m> m_0$ vertices with minimum degree at least
$(1-\epsilon_0)m$ can be edge-decomposed into copies of $H$.
\end{theorem}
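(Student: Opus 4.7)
The plan is to deduce Theorem \ref{gust} from the digraph version, Theorem \ref{gustdigr}, exactly as the paragraph preceding the statement suggests. Explicitly, form the symmetric digraphs $\vec{H}$ and $\vec{G}$ by replacing every edge $uv$ with the pair of opposite arcs $\vec{uv}$ and $\vec{vu}$. Any subdigraph of $\vec{G}$ isomorphic to $\vec{H}$ must, for every arc it contains, also contain the reverse arc, so arc-disjoint copies of $\vec{H}$ in $\vec{G}$ are in bijection with edge-disjoint copies of $H$ in $G$. Consequently, any arc-decomposition of $\vec{G}$ into copies of $\vec{H}$ at once gives the desired edge-decomposition of $G$ into copies of $H$.

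The bulk of the work is then the verification of the five hypotheses of Theorem \ref{gustdigr} applied with $D = \vec{H}$. Conditions (1), (4), (5) are essentially for free: $e(\vec{G}) = 2e(G)$ is a multiple of $e(\vec{H}) = 2e(H)$ by the $H$-divisibility of $G$; and choosing $m_0 \geq N_{\vec{H}}$ and $\epsilon_0 \leq \epsilon_{\vec{H}}$ handles the size and minimum-degree requirements. Condition (3) holds vacuously because every arc of $\vec{G}$ already has its reverse in $\vec{G}$.

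The only non-routine check is condition (2). Since the digraphs are symmetric, $d^+(v_i) = d^-(v_i) = \deg_H(v_i)$ and $d^+(u_j) = d^-(u_j) = \deg_G(u_j)$, so the two simultaneous equations collapse to the single requirement
\[
\sum_{v_i \in V(H)} a_{ij}\deg_H(v_i) = \deg_G(u_j) \qquad \text{for every } u_j \in V(G).
\]
Here $\gcd(H) = \gcd_i \deg_H(v_i)$ divides $\deg_G(u_j)$ by $H$-divisibility, and $\deg_G(u_j) \geq (1-\epsilon_0)m$ can be forced above any fixed constant by enlarging $m_0$. The Sylvester--Frobenius (``Chicken McNugget'') theorem then guarantees non-negative integer coefficients $a_{ij}$, one system per vertex $u_j$.

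This last point is the only place where one has to do anything beyond bookkeeping, and even it is elementary; no new combinatorial idea is required. Once conditions (1)--(5) have been certified, Theorem \ref{gustdigr} fires and yields the arc-decomposition of $\vec{G}$, which translates back to the required $H$-decomposition of $G$.
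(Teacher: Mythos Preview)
Your proposal is correct and follows exactly the route the paper indicates: the paper simply asserts that Theorem~\ref{gust} is obtained from Theorem~\ref{gustdigr} ``by viewing simple graphs $G$ and $H$ as digraphs, by orienting each edge in both directions,'' and you have faithfully supplied the routine verification of conditions (1)--(5), including the Frobenius-number observation needed for (2). There is nothing to add or correct.
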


In the proof of Theorem \ref{ourrealthm} we shall need the analogue of Wilson's theorem for $H$-packings into complete bipartite graphs $K_{m,n}$, in which case the obvious divisibility conditions are $$e(H)|mn\;\; \text{,} \;\; \gcd(H) | m \;\; \text{and} \;\; \gcd(H)| n.$$

\begin{theorem}\label{hagg}
Let $H$ be a bipartite graph. There exists an integer $n_0$, depending on $H$, such that every $H$-divisible $K_{m,n}$ with $m, n>n_0$ can be edge-decomposed into copies of $H$.
\end{theorem}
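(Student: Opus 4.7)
The natural approach is to derive Theorem \ref{hagg} from the digraph version of Gustavsson's theorem (Theorem \ref{gustdigr}). I would encode both $H$ and $K_{m,n}$ as \emph{symmetric} digraphs by replacing each undirected edge with a pair of arcs in opposite directions; call the resulting digraphs $D$ and $G$ on vertex sets $V(H)$ and $M \cup N$ respectively. Under this encoding an arc-disjoint decomposition of $G$ into copies of $D$ is precisely the same data as an $H$-decomposition of $K_{m,n}$, and allowing the two bipartition classes of $H$ to be mapped into $M$ or into $N$ independently corresponds naturally to symmetric copies of $D$ in the symmetric $G$.

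Next I would verify Gustavsson's hypotheses. The edge-count condition $e(D) \mid e(G)$ reduces to $e(H) \mid mn$, which is given. For the degree-compatibility condition (condition 2 of Theorem \ref{gustdigr}), one must express $n$ and $m$ as non-negative integer combinations of $\{\deg(v) : v \in V(H)\}$: since in the symmetric digraph every vertex $u \in M$ satisfies $d^+_G(u) = d^-_G(u) = n$ and every $v \in N$ satisfies $d^+_G(v) = d^-_G(v) = m$, while in $D$ every vertex $v$ has $d^+_D(v) = d^-_D(v) = \deg_H(v)$. The assumption $\gcd(H) \mid m$ and $\gcd(H) \mid n$ together with the Sylvester-Frobenius theorem provides such combinations for all sufficiently large $m, n$. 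Condition 3 is vacuous for symmetric $D$, and condition 4 holds once $m + n \ge N_D$.

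The main obstacle is Gustavsson's minimum in/out-degree condition 5: $\delta^+(G), \delta^-(G) > (1-\epsilon_D)|V(G)|$. For our $G$, every vertex in $M$ has $d^\pm = n$ and every vertex in $N$ has $d^\pm = m$, while $|V(G)| = m + n$, so $\delta^\pm/|V(G)| \le 1/2$, which violates condition 5 for any small $\epsilon_D$. Consequently Gustavsson's theorem does not apply directly. To overcome this I would augment $G$ by attaching an auxiliary structure on extra vertices/arcs, chosen so that (i) the enlarged digraph satisfies condition 5 and remains $D$-divisible, and (ii) the auxiliary portion admits its own $D$-decomposition which can be peeled off, leaving a $D$-decomposition of the original $G$ intact. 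Designing this auxiliary structure is the technical heart of the argument; it amounts to the classical design-theoretic extension of Wilson's theorem to complete bipartite graphs, and in fact is where most of the work of the proof lies.
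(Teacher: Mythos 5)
You correctly identify the central obstacle --- with the naive symmetric-digraph encoding on the vertex set $M \cup N$, every vertex of $G$ has in- and out-degree at most $\max\{m,n\}$ while $|V(G)| = m+n$, so Gustavsson's density condition $\delta^\pm > (1-\epsilon_D)|V(G)|$ fails badly --- but your proposed remedy is not a proof and, as sketched, cannot be made to work. Adding an auxiliary arc set $A$ so that $G \cup A$ is dense and then ``peeling off'' a decomposition of $A$ is logically invalid: even if both $G \cup A$ and $A$ admit $D$-decompositions, the copies of $D$ in a decomposition of $G \cup A$ will in general straddle $G$ and $A$, so nothing restricts to a decomposition of $G$ alone. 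To make the peeling work you would need a decomposition of $G \cup A$ containing a decomposition of $A$ as a sub-family, and arranging that is essentially the original problem over again. So the ``technical heart'' you defer is exactly the step that is missing.

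The paper resolves this with a different, and crucial, encoding: for $m=n$ it identifies the two sides of $K_{n,n}$, turning each edge $ab'$ into an arc from $a$ to $b$ of a digraph on only $n$ vertices (with loops for the vertical edges $aa'$). Now every vertex has in- and out-degree $n$, i.e.\ degree $(1-o(1))|V(G)|$, and Theorem \ref{gustdigr} applies once one first removes $n$ copies of $H$, each using exactly one vertical edge, to dispose of the loops. A copy of $H$ embedded with its two colour classes sent to disjoint subsets of $\{1,\dots,n\}$ becomes a loopless digraph $H'$, and a decomposition into copies of $H'$ (or of $H' \cup (H')^r$ with $(H')^r$ the reversed copy, which is what lets one get by with divisibility by $\gcd(H)$ only) pulls back to an $H$-decomposition of $K_{n,n}$. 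The case $n \le m \le (1+\epsilon')n$ is then handled by first isolating $m-n$ vertices of the larger class with some copies of $H$, and general $m,n \ge (n_0')^2$ by partitioning both sides into blocks of size about $n_0'$ so that each induced complete bipartite block is $H$-divisible. Without this folding idea, or some genuine substitute for it, your argument does not go through.
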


This was proved by H\"aggkvist~\cite{Ha} for the case when $H$ is regular, $m=n$, and under stronger divisibility assumptions. However, H\"aggkvist's proof was before Gustavsson's theorem. With Theorem \ref{gustdigr} at our disposal, we can give a proof of Theorem \ref{hagg}. While it is almost certain that its statement has been well-known, we could not find any explicit reference. Thus, we shall give a proof sketch, skipping some technical details.

\begin{proof}
First suppose that $m=n$. The graph $K_{n,n}$ on vertices $\{1,...,n\}$ and $\{1',...,n'\}$ can be thought of as a directed graph with loops on $\{1,...n\}$ by replacing each edge $ab'$ with a directed edge $a$ to $b$. By embedding $H$ so that the bipartite classes of $H$ are sent to disjoint subsets of $\{1,...,n\}$ we can
regard $H$ as a directed graph $H'$ without loops. By removing $n$ copies of $H$
from $K_{n,n}$ first, where each copy has exactly one 'vertical' edge, we
reduce to the case of decomposing a dense digraph $G$ (without loops) into
copies of the digraph $H'$. Here `dense' means that we must ensure that
$\delta^\pm(G)>(1-\epsilon)n$. The packing of G can be done provided (a) $n$
is large enough; (b) the number of edges is divisible by $e(H)$; and (c) the
in- and out-degrees of any vertex of $G$ are representable as a non-negative
linear combination of the in- and out- degrees of vertices of $H'$. This last
condition should translate to the assumption than $n$ is divisible by both the
$\gcd$ of the degrees of the vertices in $A$ and the $\gcd$ of the degrees of the
vertices in $B$, where $(A,B)$ is the bipartition of $H$. (This assumes one wants
to pack all the copies of $H$ the same way round. If not, pack $H\cup H^r$ where
$H^r$ is $H$ with the bipartition reversed, and possibly remove one extra copy
of $H$ initially to ensure that $2e(H)$ divides $e(G)$. Then $n$ need only be
divisible by the $\gcd(H)$.)

So there is an integer $n'_0$ such that the theorem holds for all $K_{n,n}$ with $n>n'_0$. In fact, the same construction works for $K_{m,n}$ if $n \le m \le (1+\epsilon'(H))n$ -- remove some copies of $H$ in order to isolate $m-n$ vertices in the larger partition class, making sure that we do not reduce the degrees of the remaining vertices too much. Having done that, apply the above digraph reduction to the remaining graph, which can be viewed as a subgraph of $K_{n,n}$. Then apply Theorem \ref{gustdigr} as above.

Given $m,n\ge n_0 = (n'_0)^2$, we can partition both sets $\{1,...,m\}$ and $\{1,...,n\}$ into subsets of size about $n_0$ each and such that each complete bipartite graph $(X,Y)$ induced on two partition classes $X\subset \{1,...,m\}$ and $Y \subset \{1,...,n\}$ is $H$-divisible. Pack every such graph with copies of $H$ as described above.
 
\end{proof}

\section{Upper bound: Outline of the Proof}

In this section we would like to describe our strategy for proving the upper bound in Theorem \ref{ourrealthm}.

Consider an $H$-packing $\mathcal{P} = \{H_1,H_2,\ldots,H_m\}$ on $n$ vertices. We want to complete it to an $H$-design by adding few vertices. We consider the \emph{uncovered graph} $G_0 = (K_n)\setminus \cup_{i=1,\ldots,m} E(H_i)$ i.e.~the graph consisting of edges that are not covered by copies of $H$.

We proceed in three steps:

\textbf{Step 1}: Reducing the transversal. We add some new vertices and all possible edges from those to other vertices. Now we delete an edge-disjoint collection of copies of $H$ from the resulting graph, so that the resulting graph has a smaller transversal than the graph we started with. This step constitutes a major part of the proof of Theorem \ref{ourrealthm} and will be carried out in Sections \ref{degen} through \ref{transversal}. 

More precisely, in Section \ref{degen} we shall construct a `nice' collection of disjoint $k$-stars on the edges of any given graph $G$. This construction will be applied in Section \ref{hyper} to $G_0$ in order to construct a hypergraph $M$ with a small edge-chromatic number, related to $G_0$. Then in Section \ref{transversal} we shall use $M$ and its edge-colouring in order to extend $\mathcal{P}$ to a packing on a larger vertex set, such that the uncovered graph has a small transversal.

In Section \ref{further} we shall describe how we iterate Step 1 in order to obtain further packings with yet smaller transversals of the uncovered graphs.

\textbf{Step 2}: Decreasing the number of uncovered edges. Starting with an uncovered graph $G_1$ that has a small transversal we extend the new packing to obtain a new uncovered graph $G_2$ with very few edges. This will be established in Section \ref{decreasing}.

\textbf{Step 3}: Completing the packing. This will be done by applying Theorem \ref{gust} and Theorem \ref{hagg} in Section \ref{completing}.


\section{Degeneracy}\label{degen}

The aim of this section is to prove the Proposition \ref{stars}, which will be our main tool for reducing the transversal of the uncovered graph. We also believe that the statement of Proposition \ref{stars} is interesting in its own right; see Section \ref{outlook} for related questions.

Recall that a $k$-star is a copy of $K_{1,k}$.

\begin{proposition}\label{stars}
For every integer $k$ and a graph $G$ of degeneracy $d$ there is a maximal collection $\mathcal{C}$ of edge disjoint $k$-stars on $G$ such that each vertex of $G$ is an endvertex to at most $d+k-1$ stars in $\mathcal{C}$.
\end{proposition}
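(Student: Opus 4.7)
The plan is to construct $\mathcal{C}$ by a single greedy sweep in the \emph{forward} direction of the degeneracy ordering. Fix the ordering $v_1,v_2,\ldots,v_n$ from Section~\ref{tools}, so that every $v_i$ has at most $d$ neighbours in $\{v_1,\ldots,v_{i-1}\}$; call these the \emph{backward neighbours} of $v_i$ and its other neighbours the \emph{forward neighbours}. Starting from $\mathcal{C}=\emptyset$, process the vertices in the order $v_1,v_2,\ldots,v_n$. At step $i$, let $U_i$ denote the set of edges at $v_i$ currently uncovered by $\mathcal{C}$, and add to $\mathcal{C}$ an arbitrary family of $\lfloor|U_i|/k\rfloor$ edge-disjoint $k$-stars centred at $v_i$, each supported on edges of $U_i$. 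The particular choice of which $k\lfloor|U_i|/k\rfloor$ edges of $U_i$ to use will turn out not to matter.

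Maximality of the resulting $\mathcal{C}$ is immediate: right after step $i$, exactly $|U_i|\bmod k<k$ edges at $v_i$ remain uncovered, and this quantity can only decrease as subsequent steps are performed. Thus the final uncovered graph has maximum degree strictly less than $k$, so no further $k$-star can be appended to $\mathcal{C}$. To bound the endvertex count of a vertex $v_j$, I would split the stars having $v_j$ as an endvertex according to whether the star's centre is a backward or a forward neighbour of $v_j$. A star centred at a backward neighbour $v_i$ ($i<j$) has $v_j$ as a leaf only through the single edge $v_iv_j$, so at most one such star arises for each of the at most $d$ backward neighbours of $v_j$, contributing at most $d$. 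A star centred at a forward neighbour $v_i$ ($i>j$) has $v_j$ as a leaf only when $v_jv_i$ is still uncovered at step $i$; since $v_jv_i$ is not incident to any of $v_{j+1},\ldots,v_{i-1}$, this forces $v_jv_i$ to have been uncovered already at the end of step $j$, hence to be one of the fewer than $k$ edges of $v_j$ left uncovered after step $j$. This yields at most $k-1$ forward contributions, and the two bounds add up to the required $d+(k-1)$.

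The conceptually non-trivial point --- and the reason a naive greedy might fail --- is the choice of sweep direction. Running the same procedure in reverse, from $v_n$ down to $v_1$, swaps the roles of forward and backward in the above analysis: the leftover argument would then control the backward contributions, but the forward contributions would become a count of stars centred at forward neighbours of $v_j$, a quantity bounded only by the potentially very large forward degree of $v_j$, with no degeneracy handle available. Sweeping forward is precisely what pairs the unbounded direction (forward degree) with the $|U_j|\bmod k<k$ leftover-slack argument, and the controlled direction (backward degree $\leq d$) with the degeneracy estimate.
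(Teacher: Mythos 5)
Your argument is correct and is in essence the paper's own: both fix an ordering in which each vertex has at most $d$ earlier neighbours, and then bound the leaf-appearances of a vertex $v_j$ by $d$ for stars centred at earlier neighbours plus $k-1$ for stars centred at later ones, the latter because any such edge must have been left uncovered when $v_j$ itself had the chance to claim it for a star of its own. The paper merely packages the construction differently (first a maximal collection of stars whose centre precedes all of its leaves, then an arbitrary completion to a maximal collection of $k$-stars), but the counting is identical to that of your one-pass greedy sweep.
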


Case $k=2$ was done by F\"uredi and Lehel \cite{FuLe}. 
We are using downdegree instead of updegree since this feels more natural to us. Let us choose an ordering $v_1,v_2,\ldots,v_n$ of vertices of $G_0$ such that the \emph{(maximum) downdegree} $\downdeg(G_0)$,  defined as the maximum of the number of edges from a vertex $v_i$ to vertices $v_j,\;j<i$, over all $i=1,2,\ldots,n$, equals $d=\dg(G)$. 

Let us construct $\mathcal{C}$ as follows: take a maximal collection of edge-disjoint
$k$-stars whose central vertex is smaller in the given ordering than any of its
endvertices, and then extend it to a maximal collection of
edge-disjoint $k$-stars. Then $u\in G$ appears as an endvertex of
a star of the first kind, or as such endvertex of a star of the
second kind which is greater than its centre at most $\downdeg(G)$
times. It appears as an endvertex smaller than the centre of a star
of the second kind at most $k-1$ times since otherwise we could form a
star of the first kind with $u$ at its centre -- this is a
contradiction as we started taking stars of the second kind in a
graph containing no stars of the first kind.

It follows that $u$ can appear at most $\downdeg(G)+k-1 = d+k-1$ times as an endvertex of a star in $\mathcal{C}$, which proves Proposition \ref{stars}. Note that the maximality of $\mathcal{C}$ implies $\Delta(G\setminus \bigcup \mathcal{C})\leq k-1$.

\section{Construction of a Hypergraph and its Colouring}\label{hyper}

Recall that we are given an $H$-packing $\mathcal{P} = \{H_1,H_2,\ldots,H_m\}$ on $n$ vertices and $G_0 =(K_n)\setminus \cup_{i=1,\ldots,m} E(H_i)$ is our uncovered graph. 

In this section we shall give a construction of a certain hypergraph $M$ on a vertex set of $G_0$ along with its edge-colouring; we shall need it in order to extend $\mathcal{P}$ to a packing on a larger set of vertices, in which the uncovered graph will have a small transversal. 

First of all, we can assume without loss of generality that $G_0$ is $H$-free (by removing a maximal set of edge-disjoint copies of $H$ from $G_0$). By Lemma \ref{degenlemma} we know that $\dg(G_0)=O(\ex(n,H)/n)$.

For a fixed vertex $v$ of $H$, let $k=\deg(v)$ and $W_1 = N(v)$. Let $(U,W)$ be a bipartition of $H$ such that $v \in U$ and $W_1 \subset W$. Denote by $R$ the ratio $|W|/|W_1|$, rounded up to the nearest integer. Let $s=|U|$ and $t=|W|$ be the sizes of the bipartition classes. For convenience we can assume that $s \geq t$, perhaps choosing another $v$.

By Proposition \ref{stars} there is a collection $\mathcal{C}$ of disjoint $k$-stars on $G_0$ with the property that each vertex of $G_0$ is an endvertex to at most $\dg(G_0)+k-1$ stars in $\mathcal{C}$. Define a multi-$k$-graph ($k$-uniform hypergraph with several
edges on the same set of vertices allowed) called $M$ as follows: for
every star of $\mathcal{C}$ there is a $k$-edge containing precisely the leaves of the star. The
maximum degree $\Delta(M)$ (i.e.~the maximum number of edges
containing any given vertex) is bounded by $\dg(G_0)+k-1 \leq
c_3 * \ex(n,H)/n$, where $c_3$ is a positive constant
depending only on $H$. We shall denote edges of $M$ by $(c,e)$ where $c\in G_0$
is the centre of the respective star and $e$ is the hyperedge consisting
precisely of the leaves of the star.

Let us introduce an edge-colouring on $M$ so that each
colour class forms a vertex-disjoint collection of hyperedges. Since every hyperedge
intersects at most $k(\Delta(M)-1)$ other hyperedges, it
can be done, using at most $k(\Delta(M)-1)+1 = c_4 * \ex(n,H)/n$ colours:
let us colour greedily as many hyperedges with colour 1 as we can, then
with colour 2 and so on (again $c_4$ is a positive constant
depending only on $H$).

Split every colour class $i$ into $R = \lceil|W|/|W_1|\rceil$
(almost) equal parts $i.1$ through $i.R$. For every colour class $i$, fix a
map $\sigma_i$ which, for every $j$, takes hyperedges coloured $i.j$ to disjoint $|W_1|(R-1)$-subsets of
vertices inside the union of hyperedges coloured with one of the colours $i.l,\; l\neq j$. Note that this
mapping takes hyperedges into sets which are disjoint from the hyperedge itself.

 Now we are ready to extend $\mathcal{P}$ in order to reduce $G_0$ to a new uncovered graph $G_1$ that has a new transversal.

\section{Construction of a transversal}\label{transversal}

We shall prove that, by adding a small set of new vertices $Q$, we can use up all the edges inside $G_0$ in edge-disjoint
copies of $H$ and end up with a graph $G_1$ on the vertex set $V\cup
Q$ with no edges inside $V$ (i.e.~with transversal $Q$). 

The following construction decreases the degrees of the vertices in $V$ below $k$.

\emph{Construction 1. Covering all $k$-stars.} Write $V=V(G_0)$. Consider $v\in H$,
$k=\deg(v)$, the bipartition $H=(U,W)$ and the colouring of the multihypergraph
$M$ as before. For every colour $i.j$ add to $G_0$ a set $Q^{i.j}=q^{i.j}_1,\ldots,q^{i.j}_{|U|-1}$ of $|U|-1$ new vertices 
and place a copy of $H=(U,W)$ in the obvious way on every star $(c,e)$ of colour $i.j$ such that
$U=\{c,q_1^{i.j},\ldots,q_{|U|-1}^{i.j}\}$ and $W \subset e\cup \sigma_i(e)$ (if $|W|$ is divisible by $|W_1|$ then we have $W = e\cup \sigma_i(e)$).
Note that the sets $e\cup \sigma_i(e)$ for different hyperedges $e$ of colour $i.j$ are pairwise
disjoint and so the copies of $H$ are placed edge-disjointly. We needed $O(\ex(n,H)/n)$ new vertices.

The following construction takes care of all the edges from within $V$. 

\emph{Construction 2. Covering the remaining edges.} By Vizing's theorem, the set of remaining edges inside $V$ can be partitioned into (at most) $k$ matchings $L_1,\dots,L_k$. Consider the smallest $r$ such that $\binom{r}{2} \geq e(H)\frac{n}{2}$ and $K_r$ can be packed completely with copies of $H$. By Theorem \ref{wilson} we can pick $r=O(\sqrt{n})$. For each matching $L_i$, add to $G_0$ a set $Q^{L_i}$ of $r$ new vertices, and pack the copies of $H$ into $K_r\cup L_i$ so that the packing is almost like the complete packing of $K_r$, except with all edges in $L_i$ covered by an edge from different copies of $H$. This way we clearly pack copies of $H$ edge-disjointly. Note that $|Q^{L_i}|=O(n^{1/2})$ for every $i$, so we need $O(n^{1/2})$ new vertices for this construction.

However, if $H$ is matching-friendly, we can do much better. Recall, $H$ is matching-friendly if $V(H)$ can be partitioned into $V_1$ and $V_2$, where $V_2$ is independent and $V_1$ is `almost' independent, i.e. the $V_1$-induced subgraph of $H$ is a non-empty matching and some isolated vertices. This implies that we can cover at least one edge of an uncovered matching $L_i$ by adding $\left|V_2\right|$ new vertices such that no edge between the new vertices will be used. It follows easily that the whole $L_i$ can be covered using at most a constant number of $c(H)$ new vertices.

Let $Q = \cup_{i,j} Q^{i.j} \bigcup \cup_i Q^{L_i}$. We have constructed a graph $G_1$ on vertex set $V\cup Q$ with transversal $Q$. By removing copies of $H$, we can assume that $G_1$ is $H$-free. For the number of added vertices we have the bound $|Q|\leq c_5 * \max\left\{\ex(n,H)/n, \sqrt{n}\right\}$. If $H$ is matching-friendly, we obtain $|Q|\leq c_6 * \ex(n,H)/n$. 

\section{Further transversals}\label{further}

We can add some more vertices to $G_1$ to reduce the transversal number of the resulting graph even further. This procedure can be repeated many times.

It suffices to prove the following lemma.

\begin{lemma}\label{zaran2}
Let $G$ be an $H$-free graph on $n$ vertices, containing a transversal $Q$ of size $q=o(n)$. Then there is an ordering $v_1,\ldots,v_n$ of the vertices of $G$ such that $\downdeg(G) \le Cq^{1-\epsilon}$, where $C$ and $0 < \epsilon = \epsilon(H) < 1$ are constants depending only on $H$. In particular, $\dg(G)\le Cq^{1-\epsilon}$.
\end{lemma}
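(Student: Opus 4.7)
The plan is to build the ordering greedily from the back: let $v_n$ be a vertex of minimum degree in $G$, then let $v_{n-1}$ be a vertex of minimum degree in $G-v_n$, and so on. Under such an ordering the downdegree at position $i$ equals $\delta(G[\{v_1,\ldots,v_i\}])$, so it suffices to prove the following claim: every induced subgraph $G'\subseteq G$ contains a vertex of degree at most $Cq^{1-\epsilon}$, with $C$ and $\epsilon>0$ depending only on $H$.

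To prove this claim, I would write $V'=V(G')$, $Q'=Q\cap V'$, and $I'=V'\setminus Q'$. Then $|Q'|\le q$, the set $I'$ is independent in $G'$ (because $Q$ is a transversal of $G$), and $G'$ is both $H$-free and $K_{s,s}$-free, where $s$ is chosen so that $H\subseteq K_{s,s}$. Let $\epsilon_H>0$ satisfy $\ex(n,H)=O(n^{2-\epsilon_H})$, as noted in Section \ref{tools}, and set $\delta=\epsilon_H/2$. I would then split into two cases. If $|V'|\le q^{1+\delta}$, I would apply Lemma \ref{degenlemma} to $G'$ directly, producing a vertex of degree at most $C_H\ex(|V'|,H)/|V'|=O(|V'|^{1-\epsilon_H})=O(q^{(1+\delta)(1-\epsilon_H)})=O(q^{1-\epsilon_H/2})$. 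If $|V'|>q^{1+\delta}$, then (for $q$ sufficiently large) $|I'|\ge |V'|/2$, and I would count edges of $G'$ directly: at most $\binom{q}{2}$ lie inside $Q'$, while the bipartite subgraph $G'[Q',I']$ is $K_{s,s}$-free and hence, by Theorem \ref{zarankievicz}, contains at most $2|I'|q^{1-1/s}+sq$ edges. Dividing the total by $|V'|$ yields average degree $O(q^2/|V'|+q^{1-1/s})=O(q^{1-\delta}+q^{1-1/s})$, producing a vertex of the desired degree. Setting $\epsilon=\min(\epsilon_H/2,\,1/s)$ closes both cases.

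The point I expect to be most delicate is the interplay between the two regimes. The Zarankiewicz-style count of Case 2 degrades once $|V'|$ approaches $q$, because the within-$Q'$ contribution $q^2/|V'|$ then swamps the leading term; conversely, Lemma \ref{degenlemma} is only useful while $|V'|$ is not too large compared to $q$, since it bounds degeneracy in terms of $|V'|$ itself rather than $|Q'|$. The choice $\delta=\epsilon_H/2$ is what calibrates the two arguments so that each one gives an exponent strictly less than $1$ in its own range, and so that the lower-order contributions $sq$ and $\binom{q}{2}$ do not spoil the conclusion. No new extremal input beyond what is already recorded in Section \ref{tools} should be needed.
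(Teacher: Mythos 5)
Your proof is correct, and it reaches the conclusion by a genuinely different assembly of the same two ingredients (Lemma \ref{degenlemma} and Theorem \ref{zarankievicz}). The paper does not argue via a minimum-degree bound for every induced subgraph; instead it constructs one explicit ordering in three blocks: it sets $Y=V(G)\setminus Q$, greedily peels off minimum-degree vertices of $Y$ in the bipartite graph $(Q,Y)$ for as long as $|Y|\ge q^{1/s}$ (where the Zarankiewicz averaging gives degree $O(q^{1-1/s})$), places the $q^{1/s}$ leftover $Y$-vertices \emph{first} in the ordering (they are independent, so their downdegree is $0$), then the vertices of $Q$ in a degeneracy order of the $H$-free graph $G[Q]$, then the peeled $Y$-vertices in reverse peeling order. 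The delicate regime you correctly identified — where the count internal to $Q$ swamps the bipartite Zarankiewicz count — is exactly the regime $|Y|<q^{1/s}$ that the paper sidesteps by shunting those few vertices to the front of the ordering, whereas you handle it by switching to Lemma \ref{degenlemma} on subgraphs of at most $q^{1+\delta}$ vertices, with $\delta=\epsilon_H/2$ calibrating the crossover. Your route is more modular (it bounds $\dg(G)$ directly and recovers the ordering from the standard back-to-front greedy procedure, which the paper already notes realizes $\downdeg=\dg$), at the cost of a slightly more involved exponent computation; the paper's route produces the ordering explicitly but needs the small observation that the leftover independent vertices can be parked where they contribute nothing. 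Both yield $\epsilon=\min(\epsilon_H/2,1/s)$-type exponents and both suffice for the iteration in Section \ref{further}. The only point you leave implicit is the degenerate range where $|Q'|$ or $|I'|$ falls below $s$ (so Theorem \ref{zarankievicz} does not literally apply) or where $q$ is bounded; in those cases the edge count, respectively the degeneracy, is trivially $O(q)$ resp. $O(1)$ and is absorbed into the constant $C$, so this is not a gap.
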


\begin{proof}
Let us write $Y=V(G)\backslash Q$ and consider the bipartite graph $G'$ with bipartition $(Y,Q)$, whose edges are the edges of $G$ having precisely one vertex in each of $Q$ and $Y$. Let $G''=G[Q]$ be the subgraph of $G$ induced by $Q$. Then the edge sets of $G'$ and $G''$ partition the edge set of $G$.

Since $G''$ is an $H$-free graph on $q$ vertices, its degeneracy is at most $c''q^{1-\epsilon}$ for a positive constant $c''$ depending only on $H$. Let us fix an ordering $u_1,u_2,\ldots,u_q$ of the vertices in $Q$ such that $\downdeg(G'')=\dg(G'')$.

Select $s$ and $t$ with $s\ge t$ such that $H\subset K_{s,t}\subset K_{s,s}$ and $s$ is chosen as small as possible. By Theorem \ref{zarankievicz} we have that
$$z(|Q|,|Y|,s, s)\leq 2|Y||Q|^{1-1/s}+s|Q|.$$
Let $\epsilon \leq 1/s$. We find that
$$\ex(K_{|Q|,|Y|},H) \leq \ex(K_{|Q|,|Y|},K_{s,s}) = z(|Q|,|Y|,s,s) \le 2|Y||Q|^{1-1/s}+s|Q|.$$
Therefore, as long as $|Y|\ge q^{1/s}$, the minimal degree in $Y$ satisfies $\delta(Y)=O(q^{1-1/s})$.

Let $v_1$ be a vertex of $Y$ of smallest possible degree in the graph $G'$, let $v_2$ be a vertex of $Y$ of minimal degree in $G'[V(G)\backslash \{v_1\}]$, take $v_3$ to be a vertex of $Y$ of minimal degree in $G'[V(G)\backslash \{v_1,v_2\}]$ and so on, until $v_r$, where $r=|Y|-q^{1/s}$. Each of those degrees is $O(q^{1-\epsilon})$, by the previous paragraph. Let $v_{r+1}, v_{r+2}\dots v_{n-q}$ be the remaining vertices in $Y$. 

Define the ordering $v_{r+1},v_{r+2}\dots v_{n-q}, u_1,u_2,\ldots,u_q,v_1,v_2,\ldots,v_r$. It follows from the construction that $\downdeg(G') \leq c'q^{1-\epsilon(H)}$.

\end{proof}

The lemma allows us to iterate the construction of Sections \ref{hyper} and \ref{transversal}. An $H$-free uncovered graph with a transversal of size $q$ has by Lemma \ref{zaran2} degeneracy $c'q^{1-\epsilon}$. Hence we can define a hypergraph as in Section \ref{hyper} and use it in order to construct a new packing as in Section \ref{transversal}. The number of new vertices needed in Construction 1 will be $O(q^{1-\epsilon})$ and in Construction 2 of Section \ref{transversal} each matching has cardinality at most $q$, so we need to add a set $Q^{L_i}$ of $O(q^{1/2})$ additional vertices for every matching $L_i$. Hence, the total number of new vertices will be at most $C(H)q^{1-\epsilon(H)}$. By construction, this set of vertices will be a transversal of the new packing, so we can just repeat the procedure, using the new transversal. We iterate as long as $Cq^{1-\epsilon} \leq q/2$, that is $q \geq C'(H)= \left(2C\right)^{1/\epsilon}$. The numbers of new vertices halves after each step, thus by adding $O(\max\left\{\ex(n,H)/n, \sqrt{n}\right\})$ new vertices, or $O(\ex(n,H)/n)$ if $H$ is matching-friendly, we can make the transversal smaller than the constant $C'(H)$. 

\section{Decreasing the number of uncovered edges}\label{decreasing}

Our next objective is to reduce the number of uncovered edges. Furthermore we shall make sure that the number of vertices in the uncovered graph is congruent $1$ modulo $e(H)$. This will be needed later for completing the packing.

Write $G_2$ for the uncovered graph with $Q \subset V(G_2)$ a transversal and $Y=V(G_2)\backslash Q$. As we know from Section \ref{further}, we may assume that $|Q| < C'(H)$. Define $g=\gcd(H)$. By adding a few new vertices to $Q$ we may also assume that $|G_2| \equiv 1 \text{ mod } e(H)$. Since $G_2$ is the complement of a partial packing and $g|e(H)$ (because $H$ is bipartite), all degrees in $G_2$ must be multiples of $g$. This implies that every vertex in $Y$ is either isolated or has at least $g$ neighbours in $Q$. We shall add a set $Z$ of new vertices of size $m|Q|^g $ in order to reduce $G_2$ to a graph $G_3$ in which every subset of vertices of $Q$ of size $g$ has at most $m$ common neighbours in $Y$ and every vertex in $Y$ has either none or at least $g$ neighbours in $Q$. That would bound the number of edges between $Y$ and $Q$ by $m|Q|^g$. In addition every vertex from $Z$ will have at most $m$ uncovered edges in $Y$ incident with it. Then $G_3$ would have at most $m|Q|^g+m|Z|+1/2(|Z|+|Q|)^2 = C''(H)$ edges.

Let $m=2n_0$, where $n_0$ a multiple of $e(H)$ that satisfies Theorem \ref{hagg} for $H$, that is any $H$-divisible complete bipartite graph with at least $n_0$ vertices in each partition class can be edge-decomposed into copies of $H$.

Let us pick a set $K = \{q_1,q_2,\ldots,q_g\}$ of some $g$ vertices in $Q$ and write $N$ for their common neighbourhood in $Y$: $N = N(q_1) \cap N(q_2) \cap \ldots \cap N(q_g) \cap Y$. If $|N| > m$, we are going to add to $G_2$ an additional set $Q^*_{q_1,\ldots,q_g} = Q^* = \{q_1^*,q_2^*,\ldots,q_m^*\}$ of $m$ vertices. If $|N|\le m$, we just pick the next $K$.

We are going to cover almost all the edges in the complete bipartite graphs $(K\cup Q^* , N)$ and $(Q^* , Y\backslash N)$. Since $|Q^*|$ and $|K\cup Q^*|$  are both divisible by $g$, to make those graphs $H$-divisible, it suffices to omit less than $e(H)$ vertices from each of the sets $N$ and $Y\backslash N$ --- so that we obtain respectively sets $N'$ and $Y'$. By Theorem \ref{hagg} it follows that both complete bipartite graphs $(K\cup Q^* , N')$ and $(Q^* , Y')$ can be packed completely with edge-disjoint copies of $H$. 

\vspace{5pt}

The uncovered graph has obtained $m$ new vertices, each of which has at most $m$ (in fact at most $2e(H)$) uncovered edges into $Y$ and the vertices in $Q^*$ have now at most $m$ common neighbours inside $Y$. Also, for each vertex in $Y$, the number of its remaining neighbours in $Q$ is a multiple of $g$.

If we repeat the procedure for all possible sets $K\subset Q$ of size $g$, we obtain the desired graph $G_3$, taking $Z$ to be the union over all $K$. Notice also that by adding $m$ vertices at a time, we make sure that $|G_3|\equiv 1 \text{ mod } e(H)$.


\section{Completing the packing}\label{completing}

We shall now apply Theorems \ref{gust} and \ref{hagg} to complete the packing. Since the uncovered graph $G_3$ has a constant number edges, the number of non-isolated vertices in it is also constant. Let $Q$ be a set of vertices of size $C_3(H)$ such that all vertices in $Y=G_3\setminus Q$ are isolated and $|Y|\equiv 0 \text { mod } e(H)$; hence also $|Y|\equiv 0 \text{ mod } g$, where $g$ is the greatest common divisor of all degrees in $H$, as before. By the construction in the previous section we may assume that $|Q|+|Y|=|G_3|\equiv 1 \text{ mod } e(H)$, thus $|Q|\equiv 1 \text{ mod } e(H)$.

\vspace{5pt}

We now apply Theorem \ref{gust} to $G_3[Q]$ to extend the packing by adding a set $X$ of few new vertices. More precisely, we pick $X$ to be set of new vertices of size $\max\left\lbrace m_0, (1/\epsilon_0)|Q|\right\rbrace$, where $m_0$ and $\epsilon_0$ are as in Theorem \ref{gust}, this is a constant of $H$. Also let $|X|\equiv |Y|+|Q|-1 \text{ mod } 2e(H)$. To complete the packing it suffices to make sure that the uncovered graph on $Q\cup X$ and the complete bipartite graph $K_{X,Y}$ are $H$-divisible.

One divisibility condition requires $|X|+|Q| \equiv 1 \text{ mod } g$ for the former graph and $|X|,|Y| \equiv 0 \text{ mod } g$ for the latter. Both conditions are satisfied since $|X|\equiv 0 \text{ mod } g$. 

The other divisibility condition requires the number of edges in each graph to be divisible by $e(H)$. This is certainly true for $K_{X,Y}$, by the choice of $Y$. So we only need to make sure that $e(H)$ divides the number of edges of the uncovered graph on $Q\cup X$, in other words \[e(H)| \left(\binom{|X|+|Q|}{2}-\binom{|Q|}{2}+e(G_3)\right).\] Since $G_3$ is the complement of an $H$-packing, we know that \[e(G_3)\equiv \binom{|Q|+|Y|}{2} \text{ mod } e(H).\] Therefore we need $e(H)$ to divide \[\binom{|X|+|Q|}{2}-\binom{|Q|}{2}+\binom{|Q|+|Y|}{2} = \binom{|X|+|Y|+|Q|}{2}-|X||Y|.\] This is true whenever $|X|\equiv |Y|+|Q|-1 \text{ mod } 2e(H)$.

Hence we can satisfy all divisibility conditions in order to apply Theorems \ref{gust} and \ref{hagg} to complete the packing. This finishes the proof of the upper bound in Theorem \ref{ourrealthm}.

\section{Lower bound}\label{lowerbound}

In this section we want to show the existence of $H$-packings that need $\Omega(\ex(n,H)/n)$ vertices in order to be completed. If $H$ is not matching-friendly, there exist also packings that need $\Omega(\sqrt{n})$ new vertices.

Let us start with the second claim. If $H$ is not matching-friendly, we need $\Omega(\sqrt{n})$ new  vertices in order to cover the edges of a complete matching $L$ on $n$ vertices. Indeed, anytime we place a copy of $H$ that covers at least one edge of $L$, we must use an edge between two new vertices (otherwise $H$ would be matching-friendly). Hence, in order to cover $n/2$ edges of $L$ we need about $\sqrt{n}$ new vertices. 

Now we have to make sure that the complement of a perfect matching \emph{is} the union of an $H$-packing for infinitely many $n$. Take two disjoint copies of $H$ and view their union $H'$ as a bipartite graph with equal partition classes, i.e. one copy of $H$ is `upside down'. Let $s$ be the size of the partition classes. By Theorem \ref{wilson}, if $n$ is sufficiently large, there is a complete packing $\mathcal{P}$ of $K_n$ with copies of $H'$. Now take two identical copies of $\mathcal{P}$, one on $\{a_1,a_2,\dots\,a_n\}$ and another on $\{b_1,b_2,\dots\,b_n\}$ and add a copy of $H'$ between $a_{i1},\dots,a_{is}$ and $b_{j1},\dots,b_{js}$ and another one between $a_{j1},\dots,a_{js}$ and $b_{i1},\dots,b_{is}$ for each copy of $H'$ in $\mathcal{P}$ between $a_{i1},\dots,a_{is}$ and $a_{j1},\dots,a_{js}$, in the obvious way. We obtain a packing on $2n$ vertices, whose union is the complement of a matching between vertices $a_i$ and $b_i$.

Now let us prove the first claim. Suppose we have found an $H$-packing $\mathcal{P}$, whose complement is an $H$-free graph with about $\ex(n,H)$ edges. In order to cover each edge of it, every copy of $H$ would use at least one out of $kn+\binom{k}{2}=(1+o(1))kn$ new edges, where $k$ is the number of new vertices. Since we need at least $\ex(n,H)/e(H)$ copies of $H$ to cover all edges of the uncovered graph, we must have $k=\Omega(\ex(n,H)/n)$.

Hence, it remains to prove that such a packing $\mathcal{P}$ exists for arbitrarily large values of $n$. Take an (extremal) $H$-free graph $\overline{G}$ on $n$ vertices with $\ex(n,H)$ edges. We would like to remove a small proportion of edges from $\overline{G}$ in order to make the complement of the remaining graph satisfy the conditions of Theorem \ref{gust}. This would ensure the existence of the desired packing.

Let us first eliminate vertices of high degree. Suppose $\overline{G}$ has $\log n$ vertices of degree at least $\epsilon_0 n$, where $\epsilon_0$ is as in Theorem \ref{gust}. Then by Theorem \ref{zarankievicz}, for a sufficiently large $n$ the bipartite graph between $m = \log n$ such vertices and the rest of $\overline{G}$ contains $K_{s,s}\supset H$, contradicting the assumption that $\overline{G}$ is $H$-free. It follows that $\overline{G}$ has less than $\log n$ vertices of degree at least $\epsilon_0 n$. Removing them, we lose at most $n \log n$ edges obtaining (unless $H$ is a forest, in which case there is nothing to prove) a new $H$-free graph $\overline{G'}$ with $(1-o(1))\ex(n,H)$ edges and no vertices of high degree.

Next we would like to remove a few more edges from $\overline{G'}$ in order to fulfill the divisibility conditions. A theorem of Pyber \cite{Py} states that a graph $F$ that has at least $n \log n * 32 r^2$ edges contains a (not necessarily spanning) $r$-regular subgraph. Let us set $r=2e(H)$. Remove edge sets of $r$-regular subgraphs $G_1 \subset \overline{G'}$, $G_2 \subset \overline{G'}\setminus G_1$ etc. until the remaining graph $\overline{G'}\setminus (G_1\cup G_2 \cup \dots \cup G_k)$ has less than $n \log n * 32 r^2$ edges. Then the graph $\overline{G''}=G_1\cup G_2 \cup \dots \cup G_k$ satisfies all conditions of Theorem \ref{gust} and still has about $\ex(n,H)$ edges, whence we obtain the desired packing $\mathcal{P}$.

\section{Outlook}\label{outlook}

What can we say about not matching-friendly bipartite graphs? If every edge of $H$ is contained in a $4$-cycle, $H$ cannot be matching-friendly. On the other hand, if $C_4 \subset H$ then $\ex(n,H)/n = \Omega(n^{1/2})$, thus being matching-friendly does not matter, as far as Theorem \ref{ourrealthm} is concerned. There are examples of bipartite $C_4$-free graphs that are not matching-friendly; take for instance $C_8$ and connect the opposite pairs of vertices by paths of length $2$. Or, alternatively, take the incidence graph of the Fano plane. However, we do not know much about the extremal numbers of such graphs, so the question is: \textbf{does `matching-friendly' ever make a difference?}

The constant $C_H$ in the proof of Lemma \ref{degenlemma} depended on $H$ only when $H$ was a disconnected forest. Is it possible to prove Lemma \ref{degenlemma} with an absolute constant, perhaps even $C_H = 1+o(1)$?

The following question was inspired by Proposition \ref{stars}. We believe it is interesting in its own right.

\begin{conjecture}
For every integer $k$ and a graph $G$ of degeneracy $d$ there is a maximal collection $\mathcal{C}$ of edge disjoint paths of length $2k$ on $G$ such that each vertex of $G$ is an endvertex to at most $c_kd$ paths in $\mathcal{C}$.
\end{conjecture}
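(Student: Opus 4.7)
The plan is to mimic the two-stage greedy proof of Proposition \ref{stars}. Fix a degeneracy ordering $v_1,v_2,\ldots,v_n$ of $G$ with $\downdeg(G) \leq d$. For a path $P=u_0 u_1 \cdots u_{2k}$ of length $2k$, let $m(P) \in \{0,1,\ldots,2k\}$ be the position at which the minimum vertex of $P$ (in the ordering) occurs. Reversing the path identifies $m(P)$ with $2k-m(P)$, so we may normalize to $m(P) \in \{0,1,\ldots,k\}$ and say that $P$ is of \emph{type $j$} if $m(P)=j$. I would build the desired collection $\mathcal{C}$ in $k+2$ stages: begin with a maximal edge-disjoint collection of type-$k$ paths (midpoint is the smallest vertex of the path, the closest analog of the ``type-1'' stars in Proposition \ref{stars}), then extend successively by type-$(k{-}1),(k{-}2),\ldots,0$ paths, and finally by arbitrary paths of length $2k$ to attain full maximality.

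To bound the endvertex multiplicity at a vertex $u$ I would process the stages one by one, echoing the star argument. In stage $k$, a type-$k$ path with $u=u_0$ has midpoint $u_k<u$, so the sub-path $u_0 u_1 \cdots u_k$ must descend at least once in the ordering, forcing an edge to an earlier vertex to be consumed in a controlled way. In stage $j<k$, too many endvertex occurrences at $u$ should allow one to splice two paths sharing $u$ into a single path of strictly smaller type, contradicting the maximality of the corresponding earlier stage. This is the direct analog of the exchange argument used to bound the number of ``type-2'' stars at a leaf by the fact that otherwise a new ``type-1'' star at that leaf could be constructed.

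The main obstacle, and the reason this remains a conjecture, is that paths of length $2k$ have a rich internal structure: a single down-edge incident to $u$ can be extended in exponentially many ways to a path of length $k$, so the naive greedy gives a bound like $O(d^k)$ rather than the conjectured $O(c_k d)$. Moreover, two paths through $u$ need not splice into a \emph{simple} path, which complicates the exchange step. One natural avenue is to decompose $E(G)$ into $O(d)$ forests by Nash--Williams, pack paths of length $2k$ inside each forest (where the tree structure makes local counting much more tractable), and then reassemble the forest-level packings into a single maximal collection in $G$ via a final top-up stage whose endvertex contribution is separately bounded. A complementary hint is that a global double-counting argument (each path uses $2k$ edges, and $e(G)\le dn$) yields $|\mathcal{C}|\le dn/(2k)$, hence an \emph{average} endvertex multiplicity of $O(d/k)$, suggesting that a suitably balanced greedy ought to attain the conjectured uniform bound; I would expect the resulting constant $c_k$ to grow exponentially with $k$.
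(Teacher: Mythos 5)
You should first be aware that the paper does not prove this statement at all: it is posed as an open conjecture in the Outlook section, with only the case $k=1$ known (it is the special case of Proposition \ref{stars}, due to F\"uredi and Lehel), and the authors merely remark that they ``think'' they can handle $k=2$. So there is no proof in the paper to compare against, and your proposal --- which you yourself describe as a plan with acknowledged obstacles --- does not close the gap either. What you have written is a reasonable research programme, but it is not a proof.

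The concrete gaps are the ones you half-identify. First, the stage-$k$ bound does not follow from the down-degree: in the star proof, a leaf occurrence of $u$ in a ``type-1'' star consumes an edge from $u$ down to the (smaller) centre, so $u$ can be such a leaf at most $\downdeg(G)\le d$ times. For a path of length $2k$ with $k\ge 2$, the endvertex $u$ is incident to only one edge of the path, and that edge may go \emph{up} in the ordering even when the minimum vertex of the path sits at the midpoint; the descent you invoke happens somewhere along the sub-path, not necessarily at $u$, so nothing at $u$ is consumed and the count at $u$ is not bounded by $d$. Second, the exchange step for stages $j<k$ is asserted but not constructed: two paths sharing the endvertex $u$ need not splice into a \emph{simple} path, need not produce a path of strictly smaller type, and even if they did, the resulting path would reuse edges already committed to $\mathcal{C}$, so no contradiction with the maximality of an earlier stage is obtained (in the star proof the contradiction comes from $k$ \emph{unused} up-edges at $u$, a situation with no analogue here). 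Third, the averaging bound $|\mathcal{C}|\le dn/(2k)$ controls only the mean multiplicity, and a maximal collection can concentrate endvertices on few vertices, so it gives no pointwise bound. The Nash--Williams reduction to forests is a plausible direction but is left entirely unexecuted --- in particular the ``final top-up stage'' to restore maximality in $G$ is exactly where the uncontrolled multiplicities would reappear.
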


This cannot hold for odd-length paths, as can be seen by taking, for instance paths of length $3$ and $G=K_{2,m}$, where $m$ is large. Case $k=1$ of the Conjecture is the special case of Proposition \ref{stars}; it was first proved by F\"uredi and Lehel \cite{FuLe}. We think, we can also prove the Conjecture for $k=2$. 

More generally, can $2k$-path in the statement of the Conjecture (or Proposition \ref{stars}) be replaced by a tree, in which all distances between the leaves are even? 

\section*{Acknowledgements} The authors would like to thank Jen\H o Lehel for introducing the problem and Paul Balister and B\' ela Bollob\' as for many useful discussions.

\end{document}